\newtheorem{theorem}{Theorem}%[section]
\newtheorem*{theorem*}{Theorem}
\newtheorem{proposition}[theorem]{Proposition}
\newtheorem{corollary}[theorem]{Corollary}
\newtheorem{example}[theorem]{Example}
\theoremstyle{definition}
\newtheorem{remark}[theorem]{Remark}
\newcommand{\hh}{{\mathbb{H}}}
\newcommand{\cc}{{\mathbb{C}}}
\newcommand{\rr}{{\mathbb{R}}}
\newcommand{\nn}{{\mathbb{N}}}
\newcommand{\bb}{{\mathbb{B}}}
\newcommand{\s}{{\mathbb{S}}}
\newcommand{\punto}{\cdot}
\newcommand\re{\operatorname{Re}}
\newcommand\im{\operatorname{Im}}
\newcommand{\Zt}{\widetilde{\mathcal Z}}
\newcommand{\dcf}{\overline{\partial}_{\scriptscriptstyle CRF}}
\title{\bf A four dimensional Bernstein Theorem}%The quaternionic Bernstein Theorem} 
\author{Alessandro Perotti\\
\small Department of Mathematics, University of Trento\\ 
\small Via Sommarive 14, I-38123 Povo Trento, Italy\\
\small alessandro.perotti@unitn.it}
\date{  }
\begin{document}

\maketitle

%%%%%%%

\begin{abstract}
We prove a four dimensional version of the Bernstein Theorem, with complex polynomials being replaced by quaternionic polynomials. We deduce from the theorem a quaternionic Bernstein's inequality and give a formulation of this last result in terms of four-dimensional zonal harmonics and Gegenbauer polynomials. 
\footnote{{\bfseries Mathematics Subject Classification (2010)}. Primary 30G35; Secondary 26D05, 33C50.\\
{\bfseries Keywords:} Bernstein Theorem, Bernstein inequality, Quaternionic polynomials, Zonal harmonics} 

\end{abstract}

%%%%%%%

\section{Introduction}

In 1930, S.\ Bernstein \cite{Bernstein} proved the following result:
\begin{theorem*}[{\bfseries A}]
Let $p(z)$ and $q(z)$ be two complex polynomials with degree of $p(z)$ not exceeding that of $q(z)$. If $q(z)$ has all its zeros in $\{|z|\le1\}$ and $|p(z)|\le |q(z)|$ for $|z|=1$, then $|p'(z)|\le |q'(z)|$ for $|z| = 1$.
\end{theorem*}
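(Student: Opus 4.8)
The plan is to argue by contradiction at a single boundary point, converting the derivative comparison into a statement about the location of zeros. Set $n=\deg q\ge \deg p$, fix $z_0$ with $|z_0|=1$, and suppose, contrary to the claim, that $|p'(z_0)|>|q'(z_0)|$. Two preliminary reductions set the stage. First, since every zero of $q$ lies in $\{|z|\le1\}$, the ratio $f=p/q$ is holomorphic on $\{|z|>1\}$ and, because $\deg p\le\deg q$, bounded at infinity; the maximum modulus principle on the exterior domain (including $\infty$) upgrades the hypothesis $|p|\le|q|$ on $|z|=1$ to $|p(z)|\le|q(z)|$ for all $|z|\ge1$. Letting $z\to\infty$ gives $|c_p|\le|c_q|$ for the leading coefficients, so that $q+\mu p$ has exact degree $n$ whenever $|\mu|<1$.

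The engine of the proof is an elementary inequality for the logarithmic derivative. If $Q(z)=c\prod_{k=1}^n(z-a_k)$ has all its zeros in $\{|z|\le1\}$, then at every circle point where $Q\neq0$ one has $\re\!\left(z\,Q'(z)/Q(z)\right)\ge n/2$. Indeed, writing $Q'/Q=\sum_k (z-a_k)^{-1}$ and using $1/z=\bar z$ on $|z|=1$ turns each summand into $\re\bigl(1/(1-a_k\bar z)\bigr)$; since $w\mapsto 1/(1-w)$ maps $\{|w|\le1\}\setminus\{1\}$ into $\{\re\ge 1/2\}$, every term is at least $1/2$. The point is that $\re(z\,Q'/Q)>0$, so in particular $Q'(z)\neq0$ at any circle point where $Q(z)\neq0$.

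Now I would combine these ingredients. Assuming for the moment that $q$ has no zeros on the circle (the general case is treated below), Rouch\'e's theorem applied to $q$ and $\mu p$ shows that for every $\mu$ with $|\mu|<1$ the polynomial $Q_\mu=q+\mu p$ has all $n$ of its zeros in $\{|z|<1\}$, since $|\mu p|<|q|$ on $|z|=1$. Take $\mu_0=-q'(z_0)/p'(z_0)$, so that $|\mu_0|<1$ by our assumption $|p'(z_0)|>|q'(z_0)|$. On one hand $Q_{\mu_0}(z_0)\neq0$, for $q(z_0)+\mu_0 p(z_0)=0$ would force $|\mu_0|=|q(z_0)|/|p(z_0)|\ge1$; on the other hand $Q_{\mu_0}'(z_0)=q'(z_0)+\mu_0 p'(z_0)=0$ by the choice of $\mu_0$. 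This directly contradicts the inequality of the previous paragraph applied to $Q_{\mu_0}$, and the contradiction proves $|p'(z_0)|\le|q'(z_0)|$.

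The step I expect to be delicate is the handling of zeros of $q$ lying on the unit circle itself, where $p/q$ need not be continuous up to the boundary and Rouch\'e's theorem does not apply on $|z|=1$. I would dispose of this by a limiting argument: replace $z$ by $rz$ with $r>1$ so that the circle-zeros are pushed strictly inside the unit disk, run the argument above, and let $r\to1^+$, using that the zeros of a polynomial depend continuously on its coefficients. The exceptional cases $p(z_0)=0$ or $q(z_0)=0$ must also be inspected, but these are either immediate or absorbed into the same perturbation.
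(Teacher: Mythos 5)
Your argument is correct in substance, but it takes a genuinely different route from the one the paper relies on (the paper states Theorem (A) as classical; its own version of the argument is the proof of Theorem \ref{thm:Bernstein}, which restricted to a complex slice is a proof of (A)). Both proofs begin identically: Rouch\'e's theorem applied to a combination of the two polynomials ($q+\mu p$ with $|\mu|<1$ in your notation, $Q-P\lambda^{-1}$ with $|\lambda|>1$ in the paper's) places all its zeros in the closed unit disc. The divergence is in the key lemma. The paper then invokes Gauss--Lucas to confine the zeros of the derivative of that combination to the closed disc, derives the contradiction at a point with $|z|>1$ (where $q'\ne 0$, again by Gauss--Lucas), and recovers the boundary case by continuity as $|z|\to 1$; working strictly outside the disc makes Rouch\'e's strict inequality automatic and sidesteps zeros of $q$ on the circle entirely. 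You instead work directly at a boundary point and replace Gauss--Lucas by the sharper Laguerre-type estimate $\re\left(zQ'(z)/Q(z)\right)\ge n/2$, which excludes a zero of $Q'$ at a circle point where $Q\ne 0$ --- strictly stronger than Gauss--Lucas, which would still allow $Q'$ to vanish on the circle, and exactly the strength your on-circle contradiction requires. The price is the bookkeeping you yourself flag: the exterior maximum-modulus step needs the small observation that $|p|\le|q|$ on $|z|=1$ forces any circle zero of $q$ to be cancelled by a zero of $p$ of at least the same order, so $p/q$ has no poles on $\{|z|\ge 1\}$; and in the dilation $z\mapsto rz$, $r\to 1^+$, the relevant fact is not continuity of roots but that $p(rz)$, $q(rz)$ satisfy the hypotheses of the already-settled case, giving $r|p'(rz)|\le r|q'(rz)|$ on $|z|=1$, which passes to the limit. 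With those details written out your proof is complete; as a by-product it yields the quantitative bound $\re(zQ'/Q)\ge n/2$, whereas the paper's Gauss--Lucas route has the advantage of transferring verbatim to the quaternionic setting by running the whole argument inside a slice $\cc_I$.
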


From this result, the famous Bernstein's inequality (first established in this form by M.\ Riesz in 1914) can be deduced. Taking $q(z)=Mz^n$, one obtains the following
\begin{theorem*}[{\bfseries B}]
If $p(z)$ is a complex polynomial of degree $d$ and $\max_{|z|=1}|p(z)|=M$, then $|p'(z)|\le dM$ for $|z| = 1$.
\end{theorem*}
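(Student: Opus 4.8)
The plan is to obtain Theorem B as an immediate specialization of Theorem A, exactly along the lines indicated in the statement: the role of the comparison polynomial is played by a suitable monomial. First I would dispose of the degenerate case. If $M=0$, then $p$ vanishes on the entire circle $|z|=1$, and since a nonzero polynomial has only finitely many zeros, $p\equiv0$; hence $p'\equiv0$ and the asserted inequality $|p'(z)|\le dM=0$ holds trivially. So I may assume $M>0$.

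Next, let $d=\deg p$ and set $q(z)=Mz^d$. I would then check that the pair $(p,q)$ meets all the hypotheses of Theorem A. The degree condition holds because $\deg p=d=\deg q$. The polynomial $q$ has a single zero at the origin of multiplicity $d$, and since $0\in\{|z|\le1\}$, all zeros of $q$ lie in the closed unit disk. Finally, on the unit circle $|z|=1$ one computes $|q(z)|=M|z|^d=M$, while by the very definition of $M=\max_{|z|=1}|p(z)|$ we have $|p(z)|\le M$ there; hence $|p(z)|\le|q(z)|$ for $|z|=1$.

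With all hypotheses verified, Theorem A yields $|p'(z)|\le|q'(z)|$ for every $z$ with $|z|=1$. It then remains only to evaluate the right-hand side: since $q'(z)=dMz^{d-1}$, on the unit circle we get $|q'(z)|=dM|z|^{d-1}=dM$. Combining the two estimates gives $|p'(z)|\le dM$ for $|z|=1$, which is precisely Theorem B.

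I do not expect any genuine obstacle in this deduction, since the entire analytic content is already packaged inside Theorem A. The only points requiring a moment of care are bookkeeping ones: choosing the exponent in $q(z)=Mz^n$ so that $n=\deg p$, which both satisfies the degree hypothesis and makes the derivative contribute exactly the factor $d$; interpreting $M$ as the nonnegative maximum so that $|q|\equiv M$ on the circle; and isolating the trivial case $M=0$, in which the comparison polynomial would otherwise degenerate to zero.
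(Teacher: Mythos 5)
Your deduction is correct and is exactly the route the paper indicates: the paper derives Theorem B from Theorem A by taking $q(z)=Mz^n$ with $n=\deg p$, which is precisely your construction. Your additional care in isolating the trivial case $M=0$ and explicitly verifying the hypotheses of Theorem A is sound bookkeeping that the paper leaves implicit.
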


This note deals with a four dimensional version of such classic results, with complex polynomials being replaced by quaternionic polynomials. The extension of Bernstein's inequality to the quaternionic setting has already appeared in \cite{GalSabadini}. The proof given there is based on a quaternionic version of the Gauss-Lucas Theorem. Unfortunately, this last result is valid only for a small class of quaternionic polynomials, as it has been recently showed in \cite{GaussLucas}, where another version of the Gauss-Lucas Theorem, valid for every polynomial, has been proved. Recently, a different proof of the quaternionic Bernstein's inequality has been given in \cite{Xu}, using the Fej\'er kernel and avoiding the use of the Gauss-Lucas Theorem.

%cite Tung?

We refer the reader to \cite{GeStoSt2013} and \cite{DivisionAlgebras} for definitions and properties concerning the algebra $\hh$ of quaternions and many aspects of the theory of quaternionic \emph{slice regular} functions, a class of functions which includes polynomials and convergent power series.
The ring $\hh[X]$ of quaternionic polynomials is defined by fixing the position of the coefficients with respect to the indeterminate $X$ (e.g.\ on the right) and by imposing commutativity of $X$ with the coefficients when two polynomials are multiplied together (see e.g.\ \cite[\S 16]{Lam}). Given two polynomials $P,Q\in\hh[X]$, let $P\punto Q$ denote the product obtained in this way. 
A direct computation (see \cite[\S 16.3]{Lam}) shows that if $P(x)\ne0$, then
\begin{equation}\label{product}
(P\punto Q)(x)=P(x)Q(P(x)^{-1}xP(x)),
\end{equation}
while $(P\punto Q)(x)=0$ if $P(x)=0$. In particular, if $P$ has real coefficients, then $(P\punto Q)(x)=P(x)Q(x)$.
In this setting, a {(left) root or zero} of a polynomial $P(X)=\sum_{h=0}^dX^h a_h$ is an element $x\in\hh$ such that $P(x)=\textstyle\sum_{h=0}^dx^h a_h=0$. 

% Given  $P(X)=\sum_{k=0}^dX^ka_k\in \hh[X]$, %with coefficients $a_k\in\hh$. 
% consider the polynomial $P^c(X)=\sum_{k=0}^dX^k\bar a_k$ and the \emph{normal polynomial} $N(P)=P \cdot P^c=P^c\cdot P$. Since $N(P)$ has real coefficients, it can be identified with a polynomial in $\rr[X]\subset\cc[X]$. 
A subset $A$ of $\hh$ is called \emph{circular} if, for each $x\in A$, $A$ contains the whole set (a 2-sphere if $x\not\in\rr$, a point if $x\in\rr$)
\begin{equation}\label{sx}
\s_x=\{pxp^{-1}\in\hh\;|\;p\in\hh^*\},
\end{equation}
where $\hh^*:=\hh\setminus\{0\}$. In particular, for any imaginary unit $I\in\hh$, $\s_I=\s$ is the 2-sphere of all imaginary units in $\hh$. 
It it is well-known (see e.g.\ \cite[\S3.3]{GeStoSt2013}) that if $P\not\equiv0$, the zero set $V(P)$ consists of isolated points or isolated 2-spheres of the form \eqref{sx}.

We show that the quaternionic version of Theorem (A) holds true after imposing a necessary assumption on the second polynomial. We require that $Q\in\hh[X]$ has every coefficients belonging to a fixed subalgebra of $\hh$. This restricted version of the Bernstein Theorem is however sufficient to deduce the quaternionic Bernstein's inequality, i.e.\ the analog of Theorem (B). 
In Section \ref{sec:zonal}, we restate the inequality in terms of four-dimensional zonal harmonics and Gegenbauer polynomials. To obtain this form, we use results of \cite{Harmonicity} to obtain an Almansi type decomposition of a quaternionic polynomial.

\section{Bernstein Theorem and inequality}

Let $I\in\s$ and let $\cc_I\subset\hh$ be the real subalgebra generated by $I$, i.e.\ the complex plane generated by 1 and $I$. %Given a polynomial $P\in\hh[X]$, we will denote by $P_I:\cc_I\to\hh$ the restriction of $P$ to $\cc_I$. 
If $\cc_I$ contains every coefficient of $P\in\hh[X]$, then we say that $P$ is a \emph{$\cc_I$-polynomial}.
Every $\cc_I$-polynomial $P$ is \emph{one-slice-preserving}, i.e.\  $P(\cc_I)\subseteq\cc_I$. If this property holds for two imaginary units $I,J$, with $I\ne\pm J$, then it holds for every unit and $P$ is called \emph{slice-preserving}. This happens exactly when all the coefficients of $P$ are real.

Let $P(X)=\sum_{k=0}^dX^ka_k\in\hh[X]$ of degree $d\geq1$. Let $P'(X)=\sum_{k=1}^dX^{k-1}ka_k$ be the derivative of $P$.  For every $I\in\s$, let $\pi_I:\hh\to\hh$ be the orthogonal projection onto $\cc_I$ and $\pi_I^\bot=id-\pi_I$. Let $P^I(X):=\sum_{k=1}^dX^ka_{k,I}$ be the $\cc_I$-polynomial with coefficients $a_{k,I}:=\pi_I(a_k)$. 

We denote by $\bb=\{x\in\hh\,|\,|x|<1\}$ the unit ball in $\hh$ and by $\s^3=\{x\in\hh\,|\,|x|=1\}$ the unit sphere.

\begin{theorem}\label{thm:Bernstein}
Let $P,Q\in\hh[X]$ be two quaternionic polynomials with degree of $P$ not exceeding that of $Q$. Assume that there exists $I\in\s$ such that $Q$ is a $\cc_I$-polynomial. If $V(Q)\subseteq\overline\bb$ and $|P(x)|\le|Q(x)|$ for $x\in\s^3$, then $|P'(x)|\le|Q'(x)|$ for  $x\in\s^3\cap\cc_I$.
\end{theorem}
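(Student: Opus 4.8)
The plan is to restrict everything to the slice $\cc_I$ and reduce to the classical complex Bernstein Theorem (A). Since $Q$ is a $\cc_I$-polynomial it is one-slice-preserving, so its restriction $q:=Q|_{\cc_I}$ is an honest complex polynomial with coefficients in $\cc_I\cong\cc$ and $\deg q=\deg Q$. The quaternionic zeros of $Q$ lying in $\cc_I$ are exactly the complex roots of $q$, so the hypothesis $V(Q)\subseteq\overline{\bb}$ forces every root of $q$ into the closed unit disc of $\cc_I$. Moreover $|Q(x)|=|q(x)|$ and $Q'|_{\cc_I}=q'$ on $\cc_I$, so the target inequality at $x\in\s^3\cap\cc_I$ reads $|P'(x)|\le|q'(x)|$.

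Next I would decompose $P$ along $\cc_I$. Fix an imaginary unit $J$ with $J\perp I$, so that $\hh=\cc_I\oplus\cc_I J$ orthogonally, and write each coefficient $a_k=a_{k,I}+b_kJ$ with $a_{k,I},b_k\in\cc_I$. For $x=z\in\cc_I$ this gives $P(z)=p(z)+\tilde p(z)J$ and $P'(z)=p'(z)+\tilde p'(z)J$, where $p:=P^I|_{\cc_I}$ and $\tilde p(z):=\sum_k z^k b_k$ are complex polynomials on $\cc_I$ of degree at most $\deg P$. Since $\{1,I,J,IJ\}$ is an orthogonal basis and the quaternionic norm is multiplicative, one obtains the Pythagorean identities
\begin{equation*}
|P(z)|^2=|p(z)|^2+|\tilde p(z)|^2,\qquad |P'(z)|^2=|p'(z)|^2+|\tilde p'(z)|^2
\end{equation*}
for every $z\in\cc_I$.

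The naive move of applying Theorem (A) to $p$ alone fails, because the first identity only yields $|P'|\ge|p'|$, the wrong direction. The key idea is instead to absorb the orthogonal part into a single auxiliary polynomial aligned with the derivative. Fix $z_0\in\s^3\cap\cc_I$ with $P'(z_0)\ne0$ (the case $P'(z_0)=0$ being trivial) and set $N:=\sqrt{|p'(z_0)|^2+|\tilde p'(z_0)|^2}=|P'(z_0)|$. Choosing $\alpha:=\overline{p'(z_0)}/N$ and $\beta:=\overline{\tilde p'(z_0)}/N$ in $\cc_I$ (so $|\alpha|^2+|\beta|^2=1$), the $\cc_I$-polynomial $r(z):=\alpha\,p(z)+\beta\,\tilde p(z)$ satisfies $r'(z_0)=N$, hence $|r'(z_0)|=|P'(z_0)|$. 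By Cauchy–Schwarz, for every $z\in\s^3\cap\cc_I$,
\begin{equation*}
|r(z)|\le\sqrt{|\alpha|^2+|\beta|^2}\,\sqrt{|p(z)|^2+|\tilde p(z)|^2}=|P(z)|\le|Q(z)|=|q(z)|,
\end{equation*}
the penultimate inequality being the hypothesis restricted to the circle $\s^3\cap\cc_I$.

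Finally, $r$ and $q$ are complex polynomials on $\cc_I\cong\cc$ with $\deg r\le\deg P\le\deg Q=\deg q$, with $q$ having all its zeros in the closed unit disc, and with $|r|\le|q|$ on the unit circle; Theorem (A) then yields $|r'(z_0)|\le|q'(z_0)|$, that is $|P'(z_0)|\le|Q'(z_0)|$. As $z_0\in\s^3\cap\cc_I$ is arbitrary, the claim follows. I expect the main obstacle to be precisely the treatment of the component of $P$ orthogonal to $\cc_I$: it is what forces the conclusion to be stated only on the slice $\cc_I$ (off the slice, $|Q|$ is no longer a single complex modulus and the reduction collapses), and the Cauchy–Schwarz alignment is the device that converts the coupled two-polynomial bound into one scalar Bernstein estimate.
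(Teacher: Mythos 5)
Your proof is correct, but it takes a genuinely different route from the paper's. The paper re-runs the classical argument behind Theorem (A) inside the slice: for $|\lambda|>1$ it forms $R=Q-P\lambda^{-1}$, projects it to the $\cc_I$-polynomial $R^I$, uses Rouch\'e's Theorem and the inequality $|\pi_I(w)|\le|w|$ to show $V(R^I)\cap\cc_I\subseteq\overline\bb$, invokes the classical Gauss--Lucas Theorem to locate the zeros of $(R^I)'$ (hence of $R'$ on the slice), and finally derives a contradiction from the choice $\lambda=Q'(x)^{-1}P'(x)$ for $|x|>1$, concluding by continuity. You instead use Theorem (A) as a black box: the orthogonal splitting $P|_{\cc_I}=p+\tilde p\,J$ with the Pythagorean identity $|P|^2=|p|^2+|\tilde p|^2$, followed by the Cauchy--Schwarz alignment $r=\alpha p+\beta\tilde p$ calibrated at the point $z_0$ so that $|r'(z_0)|=|P'(z_0)|$ while $|r|\le|P|\le|q|$ on the circle. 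The two devices for handling the component of $P$ orthogonal to $\cc_I$ are morally parallel --- the paper's quaternionic scalar $\lambda$ aligns $P'$ with $Q'$ much as your pair $(\alpha,\beta)$ aligns the two complex components of $P'$ --- but your reduction is more modular and arguably shorter, since it imports the classical theorem wholesale rather than re-proving it, at the cost of working one point $z_0$ at a time with a point-dependent auxiliary polynomial. All the steps check out: $q:=Q|_{\cc_I}$ has degree $\deg Q$ with all roots in the closed disc because $V(q)=V(Q)\cap\cc_I$, the identity $r'(z_0)=N$ is a direct computation in the commutative field $\cc_I$, and the degenerate case $P'(z_0)=0$ is handled separately as you note.
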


\begin{proof}
Let $\lambda\in\hh$ with $|\lambda|>1$ and set $R:=Q-P\lambda^{-1}\in\hh[X]$. The polynomials $Q$ and $R^I=Q-(P\lambda^{-1})^I$ are $\cc_I$-polynomials and then they can be identified with elements of $\cc_I[X]$, with $\deg(R^I)\le\deg(Q)$. For every $x\in\cc_I$, it holds
\[|R^I(x)-Q(x)|=|(P\lambda^{-1})^I(x)|=|\pi_I((P\lambda^{-1})(x))|\le|(P\lambda^{-1})(x)|=\frac{|P(x)|}{|\lambda|}.
\]
If $x\in\s^3\cap\cc_I=\{x\in\cc_I\,|\,|x|=1\}$, then
\begin{equation}\label{eq:inequality}
|R^I(x)-Q(x)|\le\frac{|P(x)|}{|\lambda|}\le\frac{|Q(x)|}{|\lambda|}\le |Q(x)|.
\end{equation}
In view of Rouch\'e's Theorem for polynomials in $\cc_I[X]$, $R^I$ and $Q$ have the same zeros in the disc $\{x\in\cc_I\,|\,|x|<1\}$. Moreover, if $|x|=1$ and $Q(x)=0$, the inequality \eqref{eq:inequality} gives $R^I(x)=0$. Since $\deg(R^I)\le\deg(Q)$ and $V(Q)\subseteq\overline\bb$, we get that $V(R^I)\cap\cc_I\subseteq \overline\bb\cap\cc_I$. From the classic Gauss-Lucas Theorem, we get $V(R')\cap\cc_I\subseteq V((R^I)')\cap\cc_I\subseteq \overline\bb\cap\cc_I$. %This means that $Q'(x)\lambda\ne P'(x)$ for each $x\in\cc_I$ with $|x|>1$.

Now let $x\in\cc_I$ with $|x|>1$ be fixed and define $\lambda:=Q'(x)^{-1}P'(x)\in\hh$. Observe that $Q'(x)\ne0$ again from the classic Gauss-Lucas Theorem applied to the polynomial $Q$ considered as element of $\cc_I[X]$. 
If $|\lambda|>1$, the polynomial $R=Q-P\lambda^{-1}\in\hh[X]$ defined as above has zero derivative at $x$: $R'(x)=Q'(x)-P'(x)\lambda^{-1}=0$, contradicting what obtained before. Therefore it must be $|\lambda|\le1$, i.e.\ $|P'(x)|/|Q'(x)|\le1$ for all $x\in\cc_I$ with $|x|>1$. By continuity, $|P'(x)|\le|Q'(x)|$ for all $x\in\cc_I$ with $|x|=1$.
\end{proof}

We recall that a quaternionic polynomial, as any slice regular function, satisfies the maximum modulus principle \cite[Theorem 7.1]{GeStoSt2013}.  Let 
 \[\|P\|=\max_{|x|=1}|P(x)|=\max_{|x|\le1}|P(x)|
 \]
 denote the sup-norm of the polynomial $P\in\hh[X]$ on $\bb$.

\begin{corollary}[{\bfseries Bernstein's inequality}]\label{cor:inequality}
If $P\in\hh[X]$ is a quaternionic polynomial of degree $d$, then $\|P'\|\le d\|P\|$.
\end{corollary}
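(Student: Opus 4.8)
The plan is to deduce the Bernstein inequality directly from Theorem \ref{thm:Bernstein} by a suitable choice of the comparison polynomial $Q$, exactly mimicking the classical passage from Theorem (A) to Theorem (B). First I would set $M:=\|P\|$ and choose $Q(X):=X^d M$. This $Q$ is a $\cc_I$-polynomial for \emph{every} $I\in\s$, since its only nonzero coefficient $M$ is real (indeed $Q$ is slice-preserving), so the hypothesis on $Q$ in Theorem \ref{thm:Bernstein} is satisfied for all choices of $I$ simultaneously. Moreover $\deg P\le d=\deg Q$, and the zero set $V(Q)=\{0\}\subseteq\overline\bb$, so the zero-location hypothesis holds as well.

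Next I would verify the modulus comparison on the unit sphere. For $x\in\s^3$ we have $|Q(x)|=|x^d|\,|M|=M=\|P\|\ge|P(x)|$, using that $\|P\|=\max_{|x|=1}|P(x)|$; this gives $|P(x)|\le|Q(x)|$ for all $x\in\s^3$. All hypotheses of Theorem \ref{thm:Bernstein} are thus met, and the theorem yields $|P'(x)|\le|Q'(x)|$ for every $x\in\s^3\cap\cc_I$. Since $Q'(X)=X^{d-1}dM$, we get $|Q'(x)|=dM$ whenever $|x|=1$, whence $|P'(x)|\le dM=d\|P\|$ for all $x\in\s^3\cap\cc_I$.

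The remaining point is to upgrade this bound from the slice $\s^3\cap\cc_I$ to the entire unit sphere $\s^3$. Here I would exploit the fact that the choice of $I\in\s$ was free: the argument above applies to every imaginary unit $I$, and the slices $\s^3\cap\cc_I$ sweep out all of $\s^3$ as $I$ ranges over $\s$ (every $x\in\s^3$ lies in $\cc_I$ for a suitable $I$, taking $I=\im(x)/|\im(x)|$ if $x\notin\rr$, and noting $x=\pm1\in\cc_I$ for every $I$ otherwise). Consequently $|P'(x)|\le d\|P\|$ holds for \emph{all} $x\in\s^3$, and taking the maximum over the sphere gives $\|P'\|\le d\|P\|$, which is the desired inequality. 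I expect the only genuinely delicate step to be this final covering argument: one must check that the freedom in $I$ is legitimate (it is, because $Q=X^dM$ satisfies the $\cc_I$-hypothesis for all $I$ at once) and that the union of slices indeed exhausts $\s^3$, so that no point of the sphere is missed when passing from the slicewise bound to the global sup-norm.
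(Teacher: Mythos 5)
Your proposal is correct and is essentially identical to the paper's own (much terser) proof: take $Q(X)=X^dM$ with $M=\|P\|$, apply Theorem \ref{thm:Bernstein}, and use that $Q$ is slice-preserving so the conclusion holds on every slice $\s^3\cap\cc_I$, which together cover $\s^3$. The extra details you supply (verifying the hypotheses and the covering argument) are all sound.
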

\begin{proof}
Let $M=\|P\|$ and apply the previous theorem to $P(X)$ and $Q(X)=MX^d$. Since $Q$ is slice-preserving, the thesis of Theorem \ref{thm:Bernstein} holds for every $I\in\s$.
\end{proof}

The inequality of Corollary \ref{cor:inequality} is best possible with equality holding if and only if $P$ is a multiple of the power $X^d$.

\begin{proposition}
If $P\in\hh[X]$ is a quaternionic polynomial of degree $d$, and $|P'(y)|= d\|P\|$ at a point $y\in\s^3$, then $P(X)=X^da$, with $a\in\hh$, $|a|=\|P\|$.
\end{proposition}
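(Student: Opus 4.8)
The plan is to reduce the statement to the classical equality case of the scalar Bernstein inequality, applied to a single, cleverly chosen complex polynomial. First I fix $I\in\s$ with $y\in\cc_I$; this is always possible, since $y$ lies in $\cc_I$ for $I=\im(y)/|\im(y)|$ when $y\notin\rr$, and in every $\cc_I$ when $y\in\rr$. Write $M=\|P\|$; note $M>0$ and, since $|P'(y)|=dM$, also $P'(y)\ne0$. Choosing $J\in\s$ with $J\perp I$ and decomposing each coefficient as $a_k=b_k+c_kJ$ with $b_k,c_k\in\cc_I$, for $x\in\cc_I$ I obtain the splitting $P(x)=F(x)+G(x)J$, where $F=P^I=\sum_kX^kb_k$ and $G=\sum_kX^kc_k$ lie in $\cc_I[X]$ and have degree $\le d$. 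Because $\cc_I\perp\cc_IJ$, this gives $|P(x)|^2=|F(x)|^2+|G(x)|^2$ for $x\in\cc_I$ and, after differentiating, $|P'(y)|^2=|F'(y)|^2+|G'(y)|^2=(dM)^2$. Identifying $\cc_I$ with $\cc$, I may treat $F,G$ as ordinary complex polynomials, and the only analytic input I need from $\|P\|=M$ is that $|F(x)|^2+|G(x)|^2\le M^2$ for all $x\in\s^3\cap\cc_I$.

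The key step is to test the classical (scalar) Bernstein inequality on $\Phi:=\overline{F'(y)}\,F+\overline{G'(y)}\,G\in\cc_I[X]$. On the unit circle Cauchy--Schwarz gives $|\Phi(x)|\le\sqrt{|F'(y)|^2+|G'(y)|^2}\,\sqrt{|F(x)|^2+|G(x)|^2}\le dM\cdot M$, so $\|\Phi\|\le dM^2$; while at $y$ one computes $\Phi'(y)=|F'(y)|^2+|G'(y)|^2=(dM)^2$. The classical inequality $|\Phi'(y)|\le(\deg\Phi)\,\|\Phi\|\le d\,\|\Phi\|$ then forces $(dM)^2\le d\cdot dM^2=(dM)^2$, so every inequality in the chain is an equality. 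In particular $\|\Phi\|=dM^2$, $\deg\Phi=d$, and $|\Phi'(y)|=d\,\|\Phi\|$ at the boundary point $y$. Invoking the equality case of the classical Bernstein inequality, I conclude $\Phi(X)=cX^d$ for some $c\in\cc_I$ with $|c|=\|\Phi\|=dM^2$.

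To finish I extract what the sharpness of Cauchy--Schwarz contributes. Writing $v(x)=(F(x),G(x))$ and $w=(F'(y),G'(y))$ in $\cc_I^2$, the identity $\langle v(x),w\rangle=\Phi(x)=cx^d$ together with $|cx^d|=dM^2=|w|\,M$ for $|x|=1$ forces, for each such $x$, both $|v(x)|=M$ and $v(x)\parallel w$. Hence $v(x)=\mu(x)w$ with $\mu(x)=cx^d/|w|^2=cx^d/(dM)^2$, i.e. $F(x)=\frac{c\,F'(y)}{(dM)^2}x^d$ and $G(x)=\frac{c\,G'(y)}{(dM)^2}x^d$ for all $|x|=1$. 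Since two polynomials agreeing at infinitely many points coincide, $F(X)=C_1X^d$ and $G(X)=C_2X^d$ with $C_1,C_2\in\cc_I$, so $P(x)=X^d(C_1+C_2J)$ on $\cc_I$. A quaternionic polynomial is determined by its restriction to $\cc_I$ (match the $\cc_I$- and $\cc_IJ$-parts coefficientwise), whence $P(X)=X^da$ with $a=C_1+C_2J$. Finally $|a|^2=|C_1|^2+|C_2|^2=(|F'(y)|^2+|G'(y)|^2)/d^2=(dM)^2/d^2=M^2$, so $|a|=M=\|P\|$.

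The main obstacle is precisely the passage from ``one linear combination of $F$ and $G$ is a pure monomial'' to ``each of $F$ and $G$ is a pure monomial''. This is exactly what the deliberate choice of weights $\overline{F'(y)},\overline{G'(y)}$ buys: it makes the Cauchy--Schwarz estimate for $\Phi$ and the Bernstein estimate simultaneously sharp, and the equality case of Cauchy--Schwarz then pins down the \emph{direction} of $(F(x),G(x))$ uniformly in $x$, rather than just one scalar projection. The sole external ingredient is the equality statement of the scalar Bernstein inequality, which is classical.
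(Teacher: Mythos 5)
Your proof is correct. It is worth pointing out that your auxiliary polynomial is, up to a positive real scalar, exactly the one the paper constructs: writing $P(x)=F(x)+G(x)J$ on $\cc_I$ as you do, one checks that $\overline{F'(y)}F(x)+\overline{G'(y)}G(x)=|P'(y)|^2\,\pi_I(P(x)P'(y)^{-1})$, so your $\Phi$ coincides with $|P'(y)|^2\,Q^I$ where $Q=P\cdot P'(y)^{-1}$ is the paper's normalization (chosen so that $Q'(y)=1$ is real and hence fixed by $\pi_I$). Both arguments therefore reduce to the equality case of the scalar Bernstein inequality for the same $\cc_I$-polynomial. Where you genuinely diverge is the finishing step: the paper reads off from $Q^I_{|\cc_I}(x)=x^dc$ that $\pi_I(a_k)=0$ for $k<d$ and then kills the $\cc_I^\bot$-component $\widetilde Q$ via the Pythagorean identity $|Q(x)|^2=1/d^2+|\widetilde Q(x)|^2\le 1/d^2$ on $\s^3\cap\cc_I$; you instead exploit the equality case of Cauchy--Schwarz pointwise on the circle, which pins down the direction of the vector $(F(x),G(x))$ and yields both $F$ and $G$ as monomials in one stroke. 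The two endgames are of comparable length and both rest on the orthogonal splitting $|P|^2=|F|^2+|G|^2$ on $\cc_I$; yours has the advantage of making transparent why the weights $\overline{F'(y)}$, $\overline{G'(y)}$ are the right choice. Two minor points of hygiene: your assertion that $P'(y)\ne0$ presupposes $d\ge1$, so, as the paper does, you should dispose of the constant case separately; and ``after differentiating'' should be understood as applying the same orthogonal splitting to $P'(x)=F'(x)+G'(x)J$, not as differentiating the norm identity.
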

\begin{proof}
We can assume that $P(X)$ is not constant. Let $b=P'(y)^{-1}$ and set $Q(X):=P(X)b=\sum_{k=1}^dX^ka_k$. Then $Q'(y)=1$, $\|Q\|=1/d$ and $\|Q'\|\le1$.
Let $I\in\s$ such that $\cc_I\ni y$. Then 
\[\textstyle 1=Q'(y)=\sum_k ky^{k-1}a_k=\pi_I(Q'(y))=\sum_k ky^{k-1}\pi_I(a_k)=(Q^I)'(y).
\]
If $x\in\cc_I\cap\s^3$, it holds
\[\textstyle\big|(Q^I)'(x)\big|=\big|\sum_k kx^{k-1}\pi_I(a_k)\big|=\big|\pi_I\big(\sum_k kx^{k-1}a_k\big)\big|\le\big|\sum_k kx^{k-1}a_k\big|=|Q'(x)|\le1.
\]
This means that  the $\cc_I$-polynomial $Q^I$, considered as an element of $\cc_I[X]$, satisfies the equality in the classic Bernstein's inequality. The same inequality implies that 
\[1=\max_{x\in\cc_I\cap\s^3}|(Q^I_{|\cc_I})'(x)|\le d\max_{x\in\cc_I\cap\s^3}|Q^I_{|\cc_I}(x)|\le d\|Q\|=1,
\]
i.e.\ $\max_{x\in\cc_I\cap\s^3}|Q^I_{|\cc_I}(x)|=1/d$. Therefore the restriction of $Q^I$ to $\cc_I$ coincides with the function $x^dc$, with $c\in\cc_I$, $|c|=1/d$:
\[Q^I(x)=\sum_{k=1}^d x^k\pi_I(a_k)=x^dc \text{\quad for every $x\in\cc_I$}.
\]
This implies that $\pi_I(a_d)=c$, $\pi_I(a_k)=0$ for each $k=1,\ldots,d-1$ and $Q$ can be written as $Q(X)=X^dc+\widetilde Q(X)$, with the coefficients of $\widetilde Q$ belonging to $\cc_I^\bot=\pi_I^\bot(\hh)$. When $x\in\cc_I\cap\s^3$, $\widetilde Q(x)\in\cc_I^\bot$, and then 
\[\frac1{d^2}\ge|Q(x)|^2=|x^dc|^2+|\widetilde Q(x)|^2=\frac1{d^2}+|\widetilde Q(x)|^2.
\]
This inequality forces $\widetilde Q$ to be the zero polynomial and then $P(X)=Q(X)b^{-1}=X^d cb^{-1}$.
\end{proof}

We now show that in Theorem \ref{thm:Bernstein}, the assumption on $Q$ to be one-slice-preserving is necessary.

\begin{proposition}\label{counterexample}
Let 
\[
P(X)=(X-i)\cdot(X-j)\cdot(X-k),\quad Q(X)=2X\cdot(X-i)\cdot (X-j).
\]
Then $V(Q)=\{0,i\}\subseteq\overline\bb$ and $|P(x)|\le|Q(x)|$ for every $x\in\s^3$, but there exists $y\in\s^3$ such that $|P'(y)|>|Q'(y)|$.
\end{proposition}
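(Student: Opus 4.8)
The plan is to reduce everything to the auxiliary quadratic $g := (X-i)\cdot(X-j)$ together with the two factorizations $Q = (2X)\cdot g$ and $P = g\cdot(X-k)$. Expanding by the rule that $X$ commutes with the coefficients gives $g(X) = X^2 - X(i+j) + k$, hence $P(X) = X^3 - X^2(i+j+k) + X(i-j+k) + 1$ and $Q(X) = 2X^3 - 2X^2(i+j) + 2Xk$, from which $P'$ and $Q'$ are read off at once. Since the leftmost factor of $Q$ is $2X$, formula \eqref{product} yields the pointwise identity $Q(x) = 2x\,g(x)$ valid for all $x$, whereas $P(x) = g(x)\big(g(x)^{-1}x\,g(x) - k\big)$ whenever $g(x)\neq0$.

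First I would determine $V(Q)$. From $Q(x) = 2x\,g(x)$ and the absence of zero divisors in $\hh$, a zero of $Q$ must satisfy $x = 0$ or $g(x) = 0$. Solving $x^2 - x(i+j) + k = 0$ by writing $x = x_0 + x_1 i + x_2 j + x_3 k$ and splitting into the four real components gives an elementary system whose unique solution is $x = i$. Hence $V(Q) = \{0, i\}$, both points lying in $\overline\bb$.

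For the inequality on $\s^3$ the two factorizations make the quotient $|P|/|Q|$ collapse. If $x\in\s^3$ and $g(x)\neq0$, then $|Q(x)| = 2|x|\,|g(x)| = 2|g(x)|$, while $|P(x)| = |g(x)|\,\big|w - k\big|$ with $w := g(x)^{-1}x\,g(x)$. Because $w$ lies on the sphere $\s_x$ one has $|w| = |x| = 1$, so $|w - k|\le|w| + |k| = 2$ and therefore $|P(x)|\le 2|g(x)| = |Q(x)|$. At the single remaining point $x = i$ both $P$ and $Q$ vanish, so the estimate holds everywhere on $\s^3$.

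The last assertion is the delicate one and is where I expect the real work to lie. Since $Q'$ has leading term $6X^2$ against the $3X^2$ of $P'$, at most points of $\s^3$ one has $|Q'| > |P'|$, so a witness $y$ must be located where $Q'$ is small, i.e. near the critical points of $Q$. Computing the zeros of $Q'$ shows they lie in the plane of imaginary units spanned by $i$ and $j$, which suggests restricting to $y = y_1 i + y_2 j$ with $y_1^2 + y_2^2 = 1$. Putting $p = y_1 + y_2$, $m = y_1 - y_2$ (so $p^2 + m^2 = 2$), a short computation gives $|P'(y)|^2 - |Q'(y)|^2 = -48 + 32p + 12m$, which is positive exactly when $8p + 3m > 12$; as $\max_{p^2+m^2=2}(8p+3m) = \sqrt{146} > 12$, admissible $y$ exist. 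Concretely I would take $y = (2i+j)/\sqrt5\in\s^3$, for which $|P'(y)|^2 = 24 - 52/\sqrt5$ and $|Q'(y)|^2 = 72 - 160/\sqrt5$, so that $|P'(y)| > |Q'(y)|$ reduces to $108/\sqrt5 > 48$, i.e. $11664 > 11520$. I would close by noting that the coefficients $i+j$ and $k$ of $Q$ span orthogonal imaginary directions, so $Q$ is a $\cc_I$-polynomial for no $I\in\s$; thus the example is consistent with Theorem \ref{thm:Bernstein} and shows that its hypothesis cannot be dropped.
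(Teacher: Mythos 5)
Your argument is correct --- I checked the expansions of $P$ and $Q$, the identity $|P'(y)|^2-|Q'(y)|^2=-48+32p+12m$ on the circle $y=y_1i+y_2j\in\s^3$, and the values $|P'(y)|^2=24-52/\sqrt5>72-160/\sqrt5=|Q'(y)|^2$ at $y=(2i+j)/\sqrt5$; all of these hold. For the inequality $|P|\le|Q|$ on $\s^3$ your route is essentially the paper's: both rest on formula \eqref{product} and the elementary estimate $|w-k|\le 2=|2x|$ when $|w|=|x|=1$. The paper peels off the common left factors $(X-i)$ and then $(X-j)$ one at a time and compares $X-k$ with $2X$ at the innermost level, while you group the shared quadratic $g=(X-i)\cdot(X-j)$ and use that the slice-preserving factor $2X$ gives $Q(x)=2x\,g(x)$ pointwise --- the same idea with lighter bookkeeping (and you also verify $V(Q)=\{0,i\}$, which the paper's proof leaves implicit). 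The genuine divergence is in the final step: the paper simply exhibits the point $y=\frac1{10}\left(1+9i+4j-\sqrt2k\right)$ together with the numerical values $|P'(y)|^2=\frac7{25}(5+\sqrt2)$ and $|Q'(y)|^2=\frac4{25}(10-3\sqrt2)$, leaving the reader to trust an unexplained computation, whereas you restrict to the purely imaginary circle in the $(i,j)$-plane, reduce $|P'|^2-|Q'|^2$ to the linear form $-48+32p+12m$ on $p^2+m^2=2$, and observe that its maximum $4\sqrt{146}$ barely exceeds $48$. This buys transparency --- one sees exactly where, and by how thin a margin, the Bernstein comparison fails --- at the cost of a slightly longer derivation. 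Your closing remark that the coefficients $i+j$ and $k$ cannot lie in a common plane $\cc_I$, so that $Q$ is one-slice-preserving for no $I\in\s$, is precisely what makes the example relevant to Theorem \ref{thm:Bernstein} and is worth keeping.
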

\begin{proof}
By a direct computation we obtain:
\begin{align}
P(X)&=X^3-X^2(i+j+k)+X(i-j+k)+1,\quad Q(X)=2X^3-2X^2(i+j)+2Xk,\\
P'(X)&=3X^2-2X(i+j+k)+i-j+k,\quad Q'(X)=6X^2-4X(i+j)+2k.
\end{align}
Let $P_1(X)=X-k$, $Q_1(X)=2X$, $P_2(X)=(X-j)\cdot P_1(X)$, $Q_2(X)=(X-j)\cdot Q_1(X)$. Then $P(X)=(X-i)\cdot P_2(X)$ and $Q(X)=(X-i)\cdot Q_2(X)$.
For every $x\in\s^3\setminus\{j\}$, using formula \eqref{product} we get
\[|P_2(x)|=|x-j||(x-j)^{-1}x(x-j)-k|\le2|x-j|=|x-j||2x|=|Q_2(x)|.
\]
Since $P_2(j)=Q_2(j)=0$, the inequality holds also at $j$. From this we obtain, for each $x\in\s^3\setminus\{i\}$,
\[|P(x)|=|x-i||P_2((x-i)^{-1}x(x-i))|\le |x-i||Q_2((x-i)^{-1}x(x-i))|=|Q(x)|.
\]
Since $P$ and $Q$ vanish at $i$, $|P(x)|\le|Q(x)|$ for every $x\in\s^3$.

Let $y=\frac1{10}\left(1+9i+4j-\sqrt2k\right)\in\s^3$. An easy computation gives
\[|P'(y)|^2=\frac7{25}(5+\sqrt2)\simeq 1.80, \quad |Q'(y)|^2=\frac4{25}(10-3\sqrt2)\simeq 0.92.
\]
\end{proof}

\section{Bernstein inequality and zonal harmonics}
\label{sec:zonal}

Since the restriction of a complex variable power $z^m$ to the unit circumference is equal to $\cos(m\theta)+i\sin(m\theta)$, the classic Bernstein inequality for complex polynomials can be restated in terms of trigonometric polynomials. In this section we show that a similar interpretation is possible in four dimensions, by means of an Almansi type decomposition of quaternionic polynomials and its relation with zonal harmonics in $\rr^4$.

Quaternionic polynomials, as any slice regular function, are biharmonic with respect to the standard Laplacian of $\rr^4$  \cite[Theorem 6.3]{Harmonicity}. In view of Almansi's Theorem (see e.g.\ \cite[Proposition 1.3]{Aronszajn}), the four real components of such polynomials have a decomposition in terms of a pair of harmonic functions. The results of \cite{Harmonicity} can be applied to obtain a refined decomposition of the polynomial in terms of the quaternionic variable. 

Let $\mathcal Z_{k}(x,a)$ denote the four-dimen\-sion\-al \emph{(solid) zonal harmonic} of degree $k$ with pole $a\in\s^3$ (see e.g.~\cite[Ch.5]{HFT}). The symmetry properties of zonal harmonics imply that $\mathcal Z_{k-1}(x,a)=\mathcal Z_{k-1}(x\overline a,1)$ for every $a\in\hh$ and any $a\in\s^3$. Moreover it holds \cite[Corollary 6.7(d)]{Harmonicity} 
\begin{equation}\label{eq:powers}
x^k=\widetilde{\mathcal Z}_k(x)-\overline x\, \widetilde{\mathcal Z}_{k-1}(x)\text{\quad  for every $x\in\hh$ and $k\in\nn$},
\end{equation}
where $\Zt_k(x)$ is the real-valued zonal harmonic defined by $\Zt_k(x):=\frac1{k+1}{\mathcal Z}_k(x,1)$ for any $k\ge0$ and by $\widetilde{\mathcal Z}_{-1}:=0$. 

In the following we will consider polynomials in the four real variables $x_0,x_1,x_2,x_3$ of the form $A(x)=\sum_{k=0}^d\widetilde{\mathcal Z}_k(x) a_k$, with quaternionic coefficients $a_k\in\hh$. They will be called \emph{zonal harmonic polynomials with pole 1}. 
All these polynomials have an axial symmetry with respect to the real axis: for every orthogonal transformation $T$ of $\hh\simeq\rr^4$ fixing 1, it holds $A\circ T=A$.

\begin{proposition}[{\bfseries Almansi type decomposition}]\label{teo:Almansi}
Let $P\in\hh[X]$ be a quaternionic polynomial of degree $d$. There exist two zonal harmonic polynomials $A$, $B$ with pole $1$, of degrees $d$ and $d-1$ respectively, such that 
\begin{equation}
P(x)=A(x)-\overline x B(x)\text{\quad for every $x\in\hh$.}\label{eq:Almansi}
\end{equation}
 The restrictions of $A$ and $B$ to the unit sphere $\s^3$ are spherical harmonics depending only on $x_0=\re(x)$.
\end{proposition}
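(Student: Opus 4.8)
The plan is to construct $A$ and $B$ by substituting the power expansion \eqref{eq:powers} into $P$ term by term. Writing $P(x)=\sum_{k=0}^d x^k a_k$ and replacing each $x^k$ by $\widetilde{\mathcal Z}_k(x)-\overline x\,\widetilde{\mathcal Z}_{k-1}(x)$, one gets
\[
P(x)=\sum_{k=0}^d \widetilde{\mathcal Z}_k(x)\,a_k \;-\; \overline x\sum_{k=0}^d \widetilde{\mathcal Z}_{k-1}(x)\,a_k .
\]
Since $\widetilde{\mathcal Z}_{-1}\equiv 0$, the $k=0$ term in the second sum drops out, and after the shift $k\mapsto k+1$ that sum becomes $\sum_{k=0}^{d-1}\widetilde{\mathcal Z}_{k}(x)\,a_{k+1}$. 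I would therefore define
\[
A(x):=\sum_{k=0}^d \widetilde{\mathcal Z}_k(x)\,a_k,\qquad B(x):=\sum_{k=0}^{d-1}\widetilde{\mathcal Z}_{k}(x)\,a_{k+1},
\]
which are zonal harmonic polynomials with pole $1$ by the very definition given above, and for which \eqref{eq:Almansi} holds identically on $\hh$.

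Next I would settle the degrees. Each $\widetilde{\mathcal Z}_k$ is a homogeneous harmonic polynomial of degree $k$, so the top-degree parts of $A$ and $B$ are $\widetilde{\mathcal Z}_d(x)\,a_d$ and $\widetilde{\mathcal Z}_{d-1}(x)\,a_d$, respectively. Because $\deg P=d$ forces $a_d\neq 0$, these do not vanish, and $A$, $B$ have degrees exactly $d$ and $d-1$ as claimed.

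For the final assertion I would argue as follows. Each $\widetilde{\mathcal Z}_k$ is a real-valued solid harmonic, so $A$ and $B$, being $\hh$-linear combinations of the $\widetilde{\mathcal Z}_k$, have every real component harmonic; hence the restrictions $A|_{\s^3}$ and $B|_{\s^3}$ are (finite sums of) spherical harmonics. To obtain the dependence on $x_0=\re(x)$ only, I would invoke the axial symmetry recorded before the statement, namely $A\circ T=A$ and $B\circ T=B$ for every orthogonal transformation $T$ of $\hh\simeq\rr^4$ fixing $1$. The subgroup of such $T$ acts transitively on each latitude set $\{x\in\s^3:\re(x)=c\}$, so any two points of $\s^3$ with the same real part lie in one orbit; consequently $A$ and $B$ take equal values on such points and factor through $x_0$ on $\s^3$.

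I do not anticipate a genuine obstacle here: once \eqref{eq:powers} is in hand, the decomposition is a direct substitution, and the only points requiring care are the index shift that produces $B$ from the $\overline x$-part and the check that the leading coefficient $a_d$ survives to fix $\deg B=d-1$. The two structural properties—harmonicity and the reduction to a function of $x_0$ on the sphere—are then formal consequences of the linearity of the Laplacian and of the stated rotational invariance of zonal polynomials.
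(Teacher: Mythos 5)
Your proposal is correct and follows essentially the same route as the paper: the same substitution of \eqref{eq:powers} into $P$, the same index shift, and the same definitions of $A$ and $B$. The only cosmetic difference is in the final assertion, where the paper cites the explicit identity $\widetilde{\mathcal Z}_k|_{\s^3}=C^{(1)}_k(x_0)$ (a Gegenbauer polynomial) while you derive the dependence on $x_0$ from the transitivity of the stabilizer of $1$ on latitude sets; both justifications are valid.
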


\begin{proof}
Let $P(X)=\sum_{k=0}^dX^kc_k$. Formula \eqref{eq:Almansi} follows immediately from \eqref{eq:powers} setting
\[
A(x)=\sum_{k=0}^d\widetilde{\mathcal Z}_k(x)c_k\text{\quad and\quad}B(x)=\sum_{k=0}^{d-1}\widetilde{\mathcal Z}_{k}(x)c_{k+1}.
\]
The restriction of $\widetilde{\mathcal Z}_k(x)$ to the unit sphere $\s^3$ is equal to the Gegenbauer (or  Chebyshev of the second kind) polynomial $C^{(1)}_{k}(x_0)$, where $x_0=\re(x)$  (see \cite[Corollary 6.7(e)]{Harmonicity}). This property implies immediately the last statement.
\end{proof}

\begin{remark}
The zonal harmonics $A$ and $B$ of the previous decomposition can be obtained from $P$ through differentiation. Since $P(x)-P(\overline x)=A(x)-\overline xB(x)-A(\overline x)+xB(\overline x)=2\im(x)B(x)$, the function $B$ is the \emph{spherical derivative} of $P$, defined (see \cite{GhPe_AIM}) on $\hh\setminus\rr$ as $P'_s(x)=(2\im(x))^{-1}(P(x)-P(\overline x))$. In \cite{Harmonicity} it was proved that the spherical derivative of a slice regular function, in particular of a quaternionic polynomial, is indeed the result of a differential operation. Given the Cauchy-Riemann-Fueter operator 
\[\dcf =\dd{}{x_0}+i\dd{}{x_1}+j\dd{}{x_2}+k\dd{}{x_3},\]
 it holds $\dcf P=-2{P'_s}$. Therefore 
\begin{equation}\label{eq:AB}
A(x)=P(x)-\frac12{\overline x}\,\dcf P(x),\quad B(x)=-\frac12\dcf P(x).
\end{equation}
Defining $A$ and $B$ by formulas \eqref{eq:AB} and using results from \cite{Harmonicity}, it can be easily seen that the Almansi type decomposition  $f(x)=A(x)-\overline xB(x)$ holds true for every slice regular function $f$, with $A$ and $B$ harmonic and axially symmetric w.r.t.\ the real axis. Observe that $B=f'_s$ is the spherical derivative of $f$ and $A=f^\circ_s+x_0f'_s$, where $f^\circ_s(x)=\frac12(f(x)+f(\overline x))$ is the \emph{spherical value} of $f$ (see \cite{GhPe_AIM}).
\end{remark}

Thanks to the previous decomposition, the quaternionic Bernstein inequality of Corollary \eqref{cor:inequality} can be restated in terms of Gegenbauer polynomials  $C^{(1)}_{k}(x_0)$. Let $d\in\nn$. For any $(d+1)$-uple $\alpha=(a_0,\ldots,a_d)\in\hh^{d+1}$, let $Q_\alpha:\s^3\to\hh$ be  defined by 
\[Q_\alpha(x):=\sum_{k=0}^d (C^{(1)}_{k}(x_0)-\overline x\,C^{(1)}_{k-1}(x_0))a_k\]
 for any $x=x_0+ix_1+jx_2+kx_3\in\s^3$ (where we set $C^{(1)}_{-1}:=0$). Being the restriction to $\s^3$ of the quaternionic polynomial $P(X)=\sum_{k=0}^dX^ka_k$, which has biharmonic real components on $\hh$, $Q_\alpha$ is a quaternionic valued \emph{spherical biharmonic} of degree $d$ (see e.g.\ \cite{GrzebulaMichalik}).

\begin{corollary}
Let $\alpha=(a_0,\ldots,a_d)$ and $\alpha'=(a_1,2a_2,\ldots,ka_k,\ldots,da_d,0)\in\hh^{d+1}$.
Then it holds:
\[
\text{if \quad}|Q_\alpha(x)|=\left|\sum_{k=0}^d \left(C^{(1)}_{k}(x_0)-\overline x\,C^{(1)}_{k-1}(x_0)\right)a_k\right|\le M\text{\quad for every $x\in\s^3$},
\]
\[\text{then\quad}
|Q_{\alpha'}(x)|=\left|\sum_{k=0}^{d-1} \left(C^{(1)}_{k}(x_0)-\overline x\,C^{(1)}_{k-1}(x_0)\right) (k+1)a_{k+1}\right|\le dM\text{\quad for every $x\in\s^3$}.
\]
\end{corollary}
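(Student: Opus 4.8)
The plan is to recognise both $Q_\alpha$ and $Q_{\alpha'}$ as restrictions to $\s^3$ of a quaternionic polynomial and of its formal derivative, and then to invoke the Bernstein inequality of Corollary \ref{cor:inequality} directly. First I would restrict the power formula \eqref{eq:powers} to the unit sphere. By the same fact used in the proof of Proposition \ref{teo:Almansi} --- namely that $\widetilde{\mathcal Z}_k$ restricts on $\s^3$ to the Gegenbauer polynomial $C^{(1)}_{k}(x_0)$ --- equation \eqref{eq:powers} becomes, for $x\in\s^3$,
\[
x^k=C^{(1)}_{k}(x_0)-\overline x\,C^{(1)}_{k-1}(x_0).
\]
Multiplying on the right by $a_k$ and summing over $k$ then shows $Q_\alpha(x)=\sum_{k=0}^d x^k a_k=P(x)$ for every $x\in\s^3$, where $P$ is exactly the polynomial attached to the definition of $Q_\alpha$.

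Next I would perform the identical identification for the shifted coefficient tuple $\alpha'$. Since its $k$-th entry is $(k+1)a_{k+1}$ for $0\le k\le d-1$ and vanishes for $k=d$, the same restricted identity gives
\[
Q_{\alpha'}(x)=\sum_{k=0}^{d-1}\big(C^{(1)}_{k}(x_0)-\overline x\,C^{(1)}_{k-1}(x_0)\big)(k+1)a_{k+1}=\sum_{k=1}^{d}x^{k-1}k\,a_k=P'(x)
\]
for every $x\in\s^3$, so that $Q_{\alpha'}$ is precisely the restriction of $P'=\sum_{k=1}^d X^{k-1}ka_k$ to the unit sphere.

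With these two identifications the corollary reduces to a reformulation. The hypothesis $|Q_\alpha(x)|\le M$ on $\s^3$ says exactly $\|P\|=\max_{|x|=1}|P(x)|\le M$, and Corollary \ref{cor:inequality} then yields $\|P'\|\le d\|P\|\le dM$; because $Q_{\alpha'}=P'|_{\s^3}$, this is the asserted bound $|Q_{\alpha'}(x)|\le dM$ for all $x\in\s^3$. I do not expect a genuine obstacle here: the argument is purely the translation between the quaternionic-variable and the zonal/Gegenbauer descriptions already set up in Proposition \ref{teo:Almansi}, and the only point requiring a little care is the bookkeeping of the index shift defining $\alpha'$, which matches term by term the coefficients $ka_k$ of the derivative $P'$.
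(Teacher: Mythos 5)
Your proposal is correct and follows exactly the paper's own argument: identify $Q_\alpha$ and $Q_{\alpha'}$ as the restrictions to $\s^3$ of $P=\sum_{k=0}^dX^ka_k$ and of $P'$ via formula \eqref{eq:powers} and the Gegenbauer restriction of $\widetilde{\mathcal Z}_k$, then apply Corollary \ref{cor:inequality}. The paper's proof is just a terser version of the same translation, so there is nothing to add.
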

\begin{proof}
Let $P(X)=\sum_{k=0}^dX^ka_k$. From formula \eqref{eq:powers} it follows that the restriction of $P'$ to the unit sphere is the spherical biharmonic $Q_{\alpha'}$. Corollary \ref{cor:inequality} permits to conclude.
\end{proof}

\begin{remark}\label{rem:max}
Let $P\in\hh[X]$ be a polynomial with Almansi type decomposition $P(x)=A(x)-\overline xB(x)$ and let $y=\alpha+J\beta\in\s^3$, $\alpha,\beta\in\rr,\beta>0$.
Let $v=A(y)\overline{B(y)}$. It follows from general properties of slice functions \cite[Lemma 5.3]{DivisionAlgebras} that if $v\in\rr$, then $|P|_{|\s_y}$ is constant, while if $v\not\in\rr$, then 
the maximum modulus of $P$  on the 2-sphere $\s_y\subset\s^3$ is attained at the point $\alpha+I\beta$, with $I=\im(v)/|\im(v)|$, while the minimum modulus is attained at $\alpha-I\beta$.
In principle, this reduces the problem of maximizing or minimizing the modulus of $P$ on the unit sphere (or ball) to a one-dimensional problem.
\end{remark}

\begin{example}
Consider the polynomial 
$P(X)=(X-i)\cdot(X-j)\cdot(X-k)$
of Proposition \ref{counterexample}. Since the first four zonal harmonics are 
\[\Zt_0(x)=1,\ \Zt_1(x)= 2 x_0,\ \Zt_2(x)=3 x_0^2 - x_1^2 - x_2^2 - x_3^2,\ \Zt_3(x)=4 x_0 (x_0^2 - x_1^2 - x_2^2 -  x_3^2),
\]
the Almansi type decomposition of $P$ is $P(x)=A(x)-\overline xB(x)$, with
\begin{align*}
A(x)&=(1 + 4 x_0^3 - 4 x_0 x_1^2 - 4 x_0 x_2^2 - 4 x_0 x_3^2)+(i+j+k)(2 x_0 - 3 x_0^2 + x_1^2 + x_2^2 + x_3^2),
\\
B(x)&=(3 x_0^2 - x_1^2 - x_2^2 - x_3^2)+i(1-2x_0)-j (1 + 2 x_0) +k (1 - 2 x_0)
\end{align*}
harmonic polynomials. Their restrictions to $\s^3$ are the spherical harmonics
\begin{align*}
A_{|\s^3}(x)&=(1 - 4 x_0 + 8 x_0^3)+i(1 + 2 x_0 - 4 x_0^2)+j(1 - 2 x_0 - 4 x_0^2)+k(1 + 2 x_0 - 4 x_0^2) ,
\\
B_{|\s^3}(x)&=(-1+ 4 x_0^2)+i(1 - 2 x_0)-j(1 + 2 x_0)+k(1 - 2 x_0).
\end{align*}
Following the observation made in Remark \ref{rem:max}, since $\im(A(y)\overline{B(y)})=4((\alpha-1)i+\alpha k)$, where $\alpha=\re(y)$, $y\in\s^3$,
one can find the 2-sphere $\s_y\subset\s^3$ where the maximum modulus of $P$ is attained.  A direct computation gives $\re(y)=(1-\sqrt{19})/6\sim-0.56$ and the corresponding maximum value $\|P\|\sim4.70$ attained at the point $\tilde y=(1-\sqrt{19})/6-i(5+\sqrt{19})/12+k(1-\sqrt{19})/12$ of $\s^3$.

\end{example}

\begin{remark}
Some of the results presented in this note can be generalized to the general setting of real alternative *-algebras, where polynomials can be defined and share many of the properties valid on the quaternions (see \cite{GhPe_AIM}). 
The polynomials of Proposition \ref{counterexample} can be defined every time the algebra contains an Hamiltonian triple $i,j,k$, i.e.\ when the algebra contains a subalgebra isomorphic to $\hh$ (see \cite[\S8.1]{Numbers}). This is true e.g.\ for the algebra of octonions and for the Clifford algebras with signature $(0,n)$, with $n\ge2$. In all such algebras we can repeat the previous proofs and get the analog of Theorem \ref{thm:Bernstein}, as well as of the Bernstein inequality (see also \cite{Xu} for this last result). %The results of Section \ref{sec:zonal}
\end{remark}

% \begin{acknowledgements}
% We warmly thank the anonymous referee, whose helpful suggestions have significantly improved the presentation.
% \end{acknowledgements}

%%%%%%%%%%%%%%%%%%%%%%%%%%%%%
%% BIBLIOGRAPHY

%\vfill

% \bibliographystyle{abbrv}
% \bibliography{Ref_AP}

\begin{thebibliography}{1}


\bibitem{Aronszajn}
N.~Aronszajn, T.~M.~Creese and L.~J.~Lipkin.
\newblock {\em Polyharmonic functions}, 
\newblock %Notes taken by Eberhard Gerlach. 
\newblock %Oxford Mathematical Monographs. Oxford Science Publications. 
The Clarendon Press, Oxford University Press, New York, 1983.  

\bibitem{HFT}
S.~Axler, P.~Bourdon, and W.~Ramey,
\newblock {\em Harmonic function theory}, 
\newblock {Graduate Texts in Mathematics}, Vol.~137, {Springer-Verlag, New York}, 1992.

\bibitem{Bernstein}
 S.~N.~Bernstein.
\newblock Sur la limitation des derivees des polyn\^omes.
\newblock {\em C.\ R.\ Acad.\ Sc.\ Paris}, 190:338--340, 1930.

\bibitem{Numbers}
H.-D. Ebbinghaus, H.~Hermes, F.~Hirzebruch, M.~Koecher, K.~Mainzer,
  J.~Neukirch, A.~Prestel, and R.~Remmert.
\newblock {\em Numbers}, volume 123 of {\em Graduate Texts in Mathematics}.
\newblock Springer-Verlag, New York, 1990.

% \bibitem{GGSarxiv}
% S.~Gal, O.~Gonz\'alez-Cervantes, and I.~Sabadini.
% \newblock On the {G}auss-{L}ucas theorem in the quaternionic setting.
% \newblock https://arxiv.org/abs/1711.02157.

\bibitem{GalSabadini}
S.~G.~Gal, I.~Sabadini, 
\newblock On Bernstein and Erd\"os-Lax's inequalities for quaternionic polynomials, 
\newblock {\em C.~R.~Math.~Acad.~Sci.~Paris}, 353(1):5--9, 2015.

\bibitem{GeStoSt2013}
G.~Gentili, C.~Stoppato, and D.~C. Struppa.
\newblock {\em Regular Functions of a Quaternionic Variable}.
\newblock Springer Monographs in Mathematics. Springer, 2013.

\bibitem{GhPe_AIM}
R.~Ghiloni and A.~Perotti.
\newblock Slice regular functions on real alternative algebras.
\newblock {\em Adv. Math.}, 226(2):1662--1691, 2011.

\bibitem{GaussLucas}
R.~Ghiloni and A.~Perotti.
\newblock The quaternionic Gauss-Lucas Theorem.
\newblock {\em Ann.\ di Mat.\ Pura ed Appl.}, 197(6):1679--1686, 2018.

\bibitem{DivisionAlgebras}
R. Ghiloni, A. Perotti, and C. Stoppato.
\newblock {D}ivision algebras of slice functions.
To appear in {\em Proceedings A of the Royal Society of Edinburgh}.
\newblock {arXiv:1711.06604}, 2019,
\newblock \url{http://arxiv.org/abs/1711.06604}, 

\bibitem{GrzebulaMichalik}
H. Grzebu\l a and S. Michalik.
\newblock  Spherical polyharmonics and Poisson kernels for polyharmonic functions.
\newblock {\em Complex Variables and Elliptic Equations}, 64:3, 420--442, 2019.
%DOI: 10.1080/17476933.2018.1441833

\bibitem{Lam}
T.~Y. Lam.
\newblock {\em A first course in noncommutative rings}, volume 131 of {\em
  Graduate Texts in Mathematics}.
\newblock Springer-Verlag, New York, 1991.

\bibitem{Harmonicity}
A.~Perotti.
\newblock {{S}lice regularity and harmonicity on {C}lifford algebras}, to appear in
\newblock {\em Topics in Clifford Analysis -- A Special Volume in Honor of Wolfgang Spr\"o\ss ig}, 
\newblock Trends Math. Birkh\"auser, Basel, 2019.
\newblock {arXiv:1801.03045},
\newblock \url{http://arxiv.org/abs/1801.03045}

% \bibitem{RahmanSchmeisser}
% Q.~I. Rahman and G.~Schmeisser.
% \newblock {\em Analytic theory of polynomials}, volume~26 of {\em London
%   Mathematical Society Monographs. New Series}.
% \newblock The Clarendon Press, Oxford University Press, Oxford, 2002.

% \bibitem{VlacciGL}
% F.~Vlacci.
% \newblock The {G}auss-{L}ucas theorem for regular quaternionic polynomials.
% \newblock In {\em Hypercomplex analysis and applications}, Trends Math., pages
%   275--282. Birkh\"auser/Springer Basel AG, Basel, 2011.

\bibitem{Xu}
Z.~Xu.
\newblock The Bernstein inequality for slice regular polynomials.
\newblock {\em arXiv:1602.08545}.

\end{thebibliography}

%%%%%%%%%%%%%%%%%%%%%%%%%%%%%

\end{document}